\def\zapcolorreset{\let\reset@color\relax\ignorespaces}
\def\colorrows#1{{\aftergroup\zapcolorreset#1}\ignorespaces}
\newcolumntype{L}[1]{>{\raggedright\arraybackslash}p{#1}}
\newcolumntype{C}[1]{>{\centering\arraybackslash}p{#1}}
\newcolumntype{R}[1]{>{\raggedleft\arraybackslash}p{#1}}
\pgfplotsset{compat=1.10}
\newtheorem{theorem}{Theorem}
\newtheorem{lemma}{Lemma}
\newtheorem{conjecture}{Conjecture}
\def\green{g}
\begin{document}

\title{On best constants in $L^2$ approximation}
\author{Andrea Bressan\footnote{
Dipartimento di matematica,
Università di Pavia, via Ferrata 5, 27100 Pavia, Italia,
{\it email: andrea.bressan@unipv.it}}
\and
Michael S. Floater\footnote{
Department of Mathematics,
University of Oslo, Moltke Moes vei 35, 0851 Oslo, Norway,
{\it email: michaelf@math.uio.no}}
\and
Espen Sande\footnote{
Department of Mathematics, University of Rome Tor Vergata,
Via della Ricerca Scientifica 1, 00133 Rome, Italy
{\it email: sande@mat.uniroma2.it}.}
}
\maketitle

\begin{abstract}
In this paper we provide explicit upper and lower bounds on certain $L^2$ $n$-widths, i.e., best constants in $L^2$ approximation. We further describe a numerical method to compute these $n$-widths approximately, and prove that this method is superconvergent. Based on our numerical results we formulate a conjecture on the asymptotic behaviour of the $n$-widths. Finally we describe how the numerical method can be used to compute the breakpoints of the optimal spline spaces of Melkman and Micchelli, which have recently received renewed attention in the field of Isogeometric Analysis.
\end{abstract}

\smallskip

\noindent {\em Math Subject Classification: }
Primary: 41A15, 41A44, 
Secondary: 34B27  

\smallskip

\noindent {\em Keywords: } eigenvalues, eigenfunctions, $n$-widths,
                           splines, isogeometric analysis,
                           total positivity, Green's functions

\section{Introduction}

In this paper we consider the following $n$-width problem: determine the smallest constant $d_n$ for which there exists an $n$-dimensional subspace $X_n$ of $L^2(a,b)$ such that for all $u\in H^r(a,b)$, $r\ge 1$,
\begin{equation}\label{eq:n-width}
\min_{v\in X_n}\Vert u -v \Vert \le d_n \Vert u^{(r)}\Vert.
\end{equation}
This problem was originally studied in the first half of the previous century by Kolmogorov \cite{Kolmogorov:36}, who showed that  $d_n$, for $n\geq r$, corresponds to the $(n+1)$-st eigenvalue of a certain differential operator. In fact, for $r=1$ he showed that $d_n=(b-a)/(n\pi)$. 

The related problem of finding a subspace $X_n$ that achieves the smallest constant $d_n$ in \eqref{eq:n-width}, a so-called optimal subspace for $H^r(a,b)$, was also studied by Kolmogorov. He showed that an optimal subspace for $H^r(a,b)$ is the span of the first $n$ eigenfunctions of the mentioned eigenvalue problem. Further optimal subspaces were later found by Melkman and Micchelli \cite{Melkman:78} and by Floater and Sande \cite{Floater:17}. The $n$-width problem was a major topic in the 70s and 80s \cite{Ioffe:68,Karlin:68,Tikhomirov:90,Karlovitz:76,Micchelli:77,Melkman:78, Korneichuk:1991,Shadrin:90,Schultz:70} and there has been recent activity in the context of Isogeometric Analysis (IGA) \cite{Evans:2009,Floater:17,Floater:18,Floater:per,Sande:2019,Chan:2018}.

In \cite{Kolmogorov:36} Kolmogorov further claimed that for $(a,b)=(0,1)$, 
\begin{equation}\label{eq:Kolmogorov}
d_n = \Big(\frac{1}{\pi n}\Big)^r + O\Big(\frac{1}{n}\Big)^{r+1},
\end{equation}
as $n\to\infty$. In \cite[Chapter VII]{Pinkus:85}  Pinkus gives a proof that $d_n=O(n^{-r})$ as $n\to\infty$.
As far as we know, no further asymptotic results concerning $d_n$ are known.

Our first result is to provide the following bounds for $d_n$.
\begin{theorem}\label{thm:first-bound}
	For all $n\ge r$ we have
$$
  \frac{(n-r+1)\pi}{b-a}  \le d_n^{-1/r} \le \frac{n\pi}{b-a}.
$$
\end{theorem}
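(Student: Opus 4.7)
The plan is to identify $d_n^{-1/r}$ with a power of an eigenvalue of the Rayleigh quotient $R(u):=\|u^{(r)}\|^2/\|u\|^2$ on $H^r(a,b)$, and to obtain both bounds from the Courant--Fischer min--max principle by testing on explicit trigonometric subspaces. By Kolmogorov's eigenvalue characterisation recalled in the introduction, $d_n=1/\sqrt{\nu_{n+1}}$, where $0=\nu_1=\cdots=\nu_r<\nu_{r+1}\le \nu_{r+2}\le\cdots$ are the eigenvalues of $R$ on $H^r(a,b)$ (the first $r$ coming from the nullspace $P_{r-1}$ of $u\mapsto u^{(r)}$). Writing $L:=b-a$, the claim is equivalent to
\[
\bigl((n-r+1)\pi/L\bigr)^{2r}\;\le\;\nu_{n+1}\;\le\;(n\pi/L)^{2r}.
\]

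\emph{Upper bound on $\nu_{n+1}$.} I would use the min--max identity $\nu_{n+1}=\min_{\dim V=n+1}\max_{0\neq u\in V}R(u)$ with the test space $V=\mathrm{span}\{\cos(j\pi(x-a)/L):j=0,1,\dots,n\}$ of dimension $n+1$. The functions $\cos(j\pi(x-a)/L)$ are mutually $L^2$-orthogonal, and their $r$-th derivatives are, up to sign, sines or cosines (depending on the parity of $r$) of the same frequencies, again mutually $L^2$-orthogonal. A direct computation then shows that on $V$, $\|u^{(r)}\|^2$ is a nonnegative combination of $(j\pi/L)^{2r}$, $j=1,\dots,n$, with the same coefficients that appear in $\|u\|^2$ (the $j=0$ term contributing only to the denominator); hence $R$ is maximised on $V$ by the top mode and $\max_{V}R=(n\pi/L)^{2r}$.

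\emph{Lower bound on $\nu_{n+1}$.} Here I would first invoke the classical duality $u\leftrightarrow u^{(r)}$: a positive natural-BC eigenfunction of $R$ on $H^r$ satisfies $u^{(j)}(a)=u^{(j)}(b)=0$ for $j=r,\dots,2r-1$, so $v:=u^{(r)}$ lies in $H^r_0(a,b)$ and $R(v)=\|u^{(2r)}\|^2/\|u^{(r)}\|^2$ equals the eigenvalue. Consequently the positive part of the natural-BC spectrum coincides with the spectrum of $R$ on $H^r_0$, which can be analysed via sine series. For $v\in H^r_0$ the expansion $v=\sum_{m\ge 1}c_m\sin(m\pi(x-a)/L)$ may be differentiated term by term $r$ times in $L^2$, because at every step the boundary terms in the integrations by parts vanish by $v^{(j)}(a)=v^{(j)}(b)=0$ for $j<r$; orthogonality then yields $\|v^{(r)}\|^2=(L/2)\sum c_m^2(m\pi/L)^{2r}$ and $\|v\|^2=(L/2)\sum c_m^2$. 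Applying the max--min characterisation of the $(n-r+1)$-th eigenvalue of $R$ on $H^r_0$ with $W=\mathrm{span}\{\sin(j\pi(x-a)/L):j=1,\dots,n-r\}$, the constraint $v\perp_{L^2} W$ forces $c_j=0$ for $j\le n-r$, whence $R(v)\ge((n-r+1)\pi/L)^{2r}$ and the lower bound follows.

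The main non-trivial inputs are (i) the duality between the natural-BC spectrum on $H^r$ and the Dirichlet spectrum on $H^r_0$, and (ii) the $r$-fold termwise differentiability of the sine series for $v\in H^r_0$; I expect (ii) to be the trickiest point to write cleanly. Both rest on repeated integration by parts with vanishing boundary data and are classical, so once they are in place the theorem reduces to the two explicit Courant--Fischer computations above on the cosine and sine test spaces.
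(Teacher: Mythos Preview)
Your argument is correct, but it follows a different route from the paper's. The paper proves both bounds by approximation-theoretic comparisons rather than spectral min--max. For the upper bound $d_n^{-1/r}\le n\pi/(b-a)$ it observes that $H^r_1:=\{u\in H^r:u^{(k)}(a)=u^{(k)}(b)=0,\ k\text{ odd},\ k<r\}\subset H^r$, and quotes the exact value $d_n(H^r_1)=((b-a)/(n\pi))^r$ from \cite{Floater:18}; since cosines are precisely the eigenfunctions of the $H^r_1$ problem, your cosine test space in the min--max is the direct spectral counterpart of this inclusion, but you compute the bound by hand instead of citing it. For the lower bound $d_n^{-1/r}\ge (n-r+1)\pi/(b-a)$ the paper takes a completely different tack: it chooses $X_n$ to be the $C^{r-2}$ spline space of degree $r-1$ on a uniform partition into $m=n-r+1$ intervals and invokes the sharp spline error estimate from \cite{Bressan:18,Sande:2019}. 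Your route---passing via the duality $v=u^{(r)}$ to the Dirichlet problem and then bounding its $(n-r+1)$-th eigenvalue from below by the sine max--min---is more self-contained (no external approximation results), at the price of the careful $r$-fold integration-by-parts justification you flag as point~(ii). The duality you flag as point~(i) is exactly the equivalence of the two eigenvalue problems stated in Section~3 of the paper, so it is available.
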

Note that in the case $(a,b)=(0,1)$ this theorem gives a proof of \eqref{eq:Kolmogorov}.

Isogeometric Galerkin methods have been previously used to approximate $n$-widths   \cite{Evans:2009}. Our tests suggest that these methods are adequate for small values of $r$ and $n$, e.g. up to $r,n\approx 5$.
The second contribution in this paper is to propose a simple superconvergent collocation method that computes a good approximation of $d_n$ for $r$ and $n$ in the range of tens. 
A byproduct of the numerical method is the approximate computation of the internal knots of the first optimal spline space of Melkman and Micchelli \cite{Melkman:78}. These knots were recently used by Chan and Evans \cite{Chan:2018} in their numerical method for wave propagation based on IGA. By using the optimal knots of Melkman and Micchelli they were able to improve the approximation properties and the maximum stable timestep for their method. Numerical methods to approximate these knots are therefore highly desirable in the context of IGA.

The results of our numerical method lead us to the following conjecture:
\begin{conjecture}\label{conjecture} 
For all $r\geq 1$, 
$$
\left |d_n^{-1/r}-\frac{(n-(r-1)/2)\pi}{b-a} \right |\to 0\quad \text{as}\quad n\to\infty.
$$
\end{conjecture}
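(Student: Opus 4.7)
My plan is to reduce Conjecture~\ref{conjecture} to the large-$t$ asymptotics of the characteristic equation of a $2r$-th order self-adjoint boundary value problem and to identify an explicit phase shift. Following Kolmogorov's Euler--Lagrange characterization of the optimal subspace for $H^r(a,b)$, one has $d_n = \mu_{n+1}^{-r}$, where $0 \le \mu_1 \le \mu_2 \le \cdots$ run over the non-negative $\mu$ for which the ODE
\[
(-1)^r u^{(2r)}(x) = \mu^{2r}\,u(x), \qquad x \in (a,b),
\]
admits a nontrivial solution satisfying the natural boundary conditions $u^{(j)}(a) = u^{(j)}(b) = 0$ for $j = r, \ldots, 2r-1$. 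The kernel is exactly the polynomials of degree $<r$, so $\mu_1 = \cdots = \mu_r = 0$, and the conjecture is then equivalent to showing $\mu_{n+1}(b-a) - (n-(r-1)/2)\pi \to 0$.

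\textbf{Setup and asymptotics.} For $\mu > 0$ I would write the general solution as $u(x) = \sum_{k=0}^{2r-1} c_k e^{\alpha_k x}$, where $\alpha_k = \mu\,e^{i\pi(2k+r)/(2r)}$ are the $2r$ roots of $\alpha^{2r} = (-1)^r \mu^{2r}$. Exactly two roots are purely imaginary (contributing $\cos\mu x, \sin\mu x$); the remaining $2r-2$ split evenly into $r-1$ modes growing toward $b$ and $r-1$ growing toward $a$. Imposing the $2r$ boundary conditions yields a linear system whose determinant, after pulling out common powers of $\mu$, becomes a function $F_r$ of the single dimensionless variable $t = \mu(b-a)$; its positive zeros are the desired eigenvalues. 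As $t\to\infty$, the $r-1$ non-oscillatory modes at each endpoint are forced by the $r$ local boundary conditions to have coefficients of order $e^{-\beta t}$, so they decouple at leading order. After extracting the dominant exponential factor, what survives is a reduced trigonometric identity on the oscillatory two-plane of the form $\sin(t + \phi_r) = o(1)$ with a phase $\phi_r$ that depends only on $r$; its zeros lie at $t_k = (k - \phi_r/\pi)\pi + o(1)$, and combining with the $r$-fold eigenvalue at $0$ gives $\mu_{n+1}(b-a) = (n - r + 1 - \phi_r/\pi)\pi + o(1)$. Two sanity checks pin down the target phase $\phi_r = -(r-1)\pi/2$: for $r=1$ one has $F_1(t) = \sin t$ and $\phi_1 = 0$, recovering Kolmogorov's exact formula; for $r=2$ a direct computation gives $F_2(t) = 2(1 - \cos t\,\cosh t)$, whose large zeros satisfy $\cos t = O(e^{-t})$ and hence cluster at $(k+\tfrac12)\pi$, corresponding to $\phi_2 = -\pi/2$. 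In both cases one obtains $\mu_{n+1}(b-a) \to (n-(r-1)/2)\pi$, matching the conjecture.

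\textbf{Main obstacle.} The hard step is establishing $\phi_r = -(r-1)\pi/2$ for all $r \ge 3$, where no clean closed form for $F_r$ is available. I would attack this either by an explicit block decomposition of the $2r \times 2r$ characteristic determinant, exploiting the reflection symmetry $x \mapsto a+b-x$ to separate even and odd parts and then reducing each block to a $2 \times 2$ trigonometric equation, or by a semiclassical transfer-matrix computation in which each of the $r-1$ non-oscillatory mode pairs contributes a fixed quarter-period to the Maslov-type index of the symplectic propagator across $[a,b]$. A secondary technical point is upgrading the soft asymptotic identity $\sin(t+\phi_r) = o(1)$ into actual convergence of the $n$-th positive zero of $F_r$; for this I would apply Rouché's theorem (or a direct sign-change argument) on intervals of length $\pi$ around the candidate roots, using non-degeneracy of the leading sinusoidal factor to rule out drift.
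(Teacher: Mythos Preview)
The paper does not prove this statement at all: it is explicitly a \emph{conjecture}, motivated solely by the numerical evidence in Table~\ref{table-conjecture2} produced by the collocation scheme of Section~4. There is no argument in the paper to compare your proposal against.

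Your proposal therefore goes well beyond the paper. The setup is correct: identifying $d_n^{-1/r}$ with $\mu_{n+1}$ for the problem $(-1)^r u^{(2r)}=\mu^{2r}u$ under the natural (equivalently, by Section~3, the Dirichlet) boundary conditions, counting the $r$-fold zero eigenvalue, and reducing to the positive zeros of a characteristic determinant $F_r(t)$ in the single variable $t=\mu(b-a)$ is exactly the right framework. Your checks for $r=1$ and $r=2$ are also correct; in particular, for $r=2$ the free--free (or clamped--clamped) beam equation $\cos t\,\cosh t=1$ has its $k$-th positive root at $(k+\tfrac12)\pi+O(e^{-t})$, which after your index shift reproduces the conjectured midpoint.

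But what you have is an outline, not a proof, and you say so yourself. Two genuine gaps remain. First, the assertion that the characteristic determinant reduces asymptotically to a \emph{single} sinusoidal factor $\sin(t+\phi_r)+o(1)$ is plausible for Birkhoff-regular problems but is not established here; you would need to show that the $(2r{-}2)$ exponential modes decouple cleanly at leading order and that no secondary trigonometric factor survives. Second, and this is the heart of the conjecture, the value $\phi_r=-(r-1)\pi/2$ is unproven for $r\ge 3$; both strategies you sketch (symmetry block-reduction of the $2r\times 2r$ determinant, or a Maslov-index count) are reasonable but unexecuted, and neither is obviously routine. A practical first step would be to consult the classical literature on eigenvalue asymptotics for regular higher-order two-point boundary problems (Birkhoff, Tamarkin, Naimark), where first-order expansions of the counting function with explicit constants are developed; the phase you need may already be recoverable from there.
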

In other words, it appears that $d_n^{-1/r}$ approaches the midpoint of the upper and lower bounds in Theorem \ref{thm:first-bound}.

\section{Proof of Theorem~\ref{thm:first-bound}}

First we prove the lower bound of $d_n$.
Let 
$$H^r_1=\{u\in H^r(a,b): u^{(k)}(a)=u^{(k)}(b)=0,\, 1\le k<r,\, k\text{ odd}\}.$$
As proved in \cite[Theorem~1]{Floater:18}, the $n$-width of $H^r_1$ is $(b-a)^r/(\pi n)^{r }$, i.e.,
for each $n\ge 1$ there exists $X_n$ such that 
\begin{equation} 
\min_{v\in X_n}\Vert u -v \Vert \le  \Big( \frac {b-a}{\pi n}\Big)^r \Vert u^{(r)}\Vert ,\qquad \forall u\in H^r_1,
\end{equation}
and for no $X_n$ the above inequality holds for  with a smaller constant.
Since $H^r_1$ is a subset of $H^r(a,b)$ we have
$$ 
d_n\ge\Big( \frac {b-a}{\pi n}\Big)^r.
$$

The upper bound for $d_n$ is obtained by making a specific choice of $X_n$.
Let $S$ be the space of $C^{r-2}$ piecewise polynomials of degree $r-1$ on the uniform partition of $(a,b)$ in $m$ segments. Then for all $u\in H^r(a,b)$ we have by \cite[Theorem 1]{Bressan:2019} (or \cite[Theorem 1]{Sande:2019}) that
\begin{equation} 
\min_{v\in S}\Vert u -v \Vert \le  \Big( \frac {b-a}{\pi m}\Big)^r \Vert u^{(r)}\Vert.
\end{equation}
Since $\dim S=m+r-1$, letting $n=\dim S\geq r$ we deduce that
$$
d_n\le\Big( \frac {b-a}{\pi (n-r+1)}\Big)^r.
$$
This completes the proof of Theorem~\ref{thm:first-bound}.

\section{Associated eigenvalue problems}

It follows from \cite{Kolmogorov:36,Melkman:78,Pinkus:85,Floater:17} that $d_n^{-2}$ is equal to the $(n+1)$-th smallest eigenvalue (counting the $r$ multiplicities of zero) of the eigenvalue problem
\begin{equation}\label{eq:other-eigenvalue-problem}
\left\{
\begin{aligned}
&(-1)^r\psi^{(2r)}(x) = \mu \psi(x) &&& x\in(a,b),\\
&\psi^{(k)}(a)=\psi^{(k)}(b)=0 &&& k=r,r+1,\dots,2r-1.
\end{aligned}
\right .
\end{equation}
Moreover, the eigenfunctions corresponding to the $n$ smallest eigenvalues span an optimal space $X_n$.
Note in particular, that $0$ is an eigenvalue of \eqref{eq:other-eigenvalue-problem} with multiplicity $r$ and the null space is the space of polynomials of degree $r-1$. 
It is straightforward to see that in the case $r=1$ the eigenvalue problem \eqref{eq:other-eigenvalue-problem} can be solved analytically, and in this case the eigenvalues are  
$$0,\left(\frac{\pi}{b-a}\right)^2, \left(\frac{2\pi}{b-a}\right)^2,\left(\frac{3\pi}{b-a}\right)^2\ldots,$$
and the eigenfunctions are 
$$1,\cos\left(\pi \frac{x-a}{b-a}\right), \cos\left(2\pi \frac{x-a}{b-a}\right), \cos\left(3\pi \frac{x-a}{b-a}\right), \ldots.$$

It can further be shown that for any $r$ the eigenvalue problem in \eqref{eq:other-eigenvalue-problem} has the same non-zero eigenvalues as the problem with Dirichlet boundary conditions  \cite{Floater:17}:
\begin{equation}\label{eq:eigenvalue-problem}
\left\{
\begin{aligned}
&(-1)^r\phi^{(2r)}(x) = \mu \phi(x) &&& x\in(a,b),\\
&\phi^{(k)}(a)=\phi^{(k)}(b)=0 &&& k=0,1,\dots,r-1.
\end{aligned}
\right .
\end{equation}
In particular, \eqref{eq:eigenvalue-problem} has a trivial null space. Thus, for $n\geq r$, $d_n^{-2}$ is the $(n+1-r)$-th eigenvalue of \eqref{eq:eigenvalue-problem}.
For $r=1$ the eigenvalues of \eqref{eq:eigenvalue-problem} are  
$$\left(\frac{\pi}{b-a}\right)^2, \left(\frac{2\pi}{b-a}\right)^2,\left(\frac{3\pi}{b-a}\right)^2\ldots,$$
and the eigenfunctions are 
$$\sin\left(\pi \frac{x-a}{b-a}\right), \sin\left(2\pi \frac{x-a}{b-a}\right), \sin\left(3\pi \frac{x-a}{b-a}\right), \ldots.$$

Let $\green$ be the Green's function associated to \eqref{eq:eigenvalue-problem}.
Since the differential operator in \eqref{eq:eigenvalue-problem} is self-adjoint it follows that $\green$ is symmetric, i.e., $\green(x,y)=\green(y,x)$.
Moreover, it is the distributional solution of
\begin{equation}\label{eq:green}
\left\{
\begin{aligned}
  &\partial^{2r}_x \green(x,y) = (-1)^r\delta_{y},&&& x \in (a,b),
\\&\partial^k_x \green(x,y)|_{x\in\{a,b\}}= 0,&&& k=0,1,\ldots, r-1,\ y\in(a,b)
\end{aligned}\right .
\end{equation}
where $\delta_y$ is the Dirac's delta at $y$, i.e., $\delta_y f= f(y)$. 
\begin{lemma}
The  Green's function associated to \eqref{eq:eigenvalue-problem} satisfies
\begin{equation}\label{eq:kernel}
 \green(x,y) = \frac{(y-a)^r (b-y)^r}{(2r-1)!(b-a)} B[\underbrace{a,\dots,a}_{r\text{ times}},y,\underbrace{b,\dots,b}_{r\text{ times}}](x),
\end{equation}
where $B[\xi_1,\dots,\xi_n]$ is the B-spline with knots $\xi_1,\dots,\xi_n$.
\end{lemma}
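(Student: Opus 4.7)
The plan is to verify that the right-hand side of (\ref{eq:kernel}), call it $\Phi(x,y)$, satisfies the two defining properties of $\green$ in (\ref{eq:green})---the distributional equation in $x$ and the boundary conditions---and then to invoke uniqueness of the Green's function.

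First, I would check the boundary conditions. The B-spline $B[a,\dots,a,y,b,\dots,b]$ has degree $2r-1$ with knots of multiplicity $r$ at each of $a$ and $b$, hence smoothness $C^{r-1}$ at these endpoints. Since the spline vanishes outside $[a,b]$, continuity of its first $r-1$ derivatives forces $\partial_x^k B|_{x=a}=\partial_x^k B|_{x=b}=0$ for $k=0,\dots,r-1$, and the scalar prefactor $(y-a)^r(b-y)^r/((2r-1)!(b-a))$ preserves these zeros.

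Next, for the distributional equation, on each of $(a,y)$ and $(y,b)$ the B-spline is a polynomial of degree $\le 2r-1$ in $x$, so $\partial_x^{2r}B=0$ classically there; the only interior distributional contribution is a delta supported at $x=y$ produced by the jump $J(y)$ of $\partial_x^{2r-1}B$ at the simple knot $y$. It therefore suffices to verify $\tfrac{(y-a)^r(b-y)^r}{(2r-1)!(b-a)}\,J(y)=(-1)^r$. To compute $J(y)$, I would use the divided-difference representation $B[\xi_1,\dots,\xi_{2r+1}](x)=(\xi_{2r+1}-\xi_1)\,[\xi_1,\dots,\xi_{2r+1}]_t\,(t-x)_+^{2r-1}$, first for distinct knots $a_1<\dots<a_r<y<b_1<\dots<b_r$. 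The Lagrange form of the divided difference together with $\partial_x^{2r-1}(t-x)_+^{2r-1}=-(2r-1)!\,\mathbf{1}_{x<t}$ writes $\partial_x^{2r-1}B$ as a linear combination of step functions $\mathbf{1}_{x<\xi_i}$; reading off the jump at $\xi_k=y$ gives $(2r-1)!(b_r-a_1)/\prod_{j\ne k}(y-\xi_j)$. Taking the confluent limit $a_i\to a$, $b_i\to b$ (by continuity of B-splines in their knots) collapses the denominator to $(-1)^r(y-a)^r(b-y)^r$, so that
\[
J(y)=\frac{(-1)^r(2r-1)!(b-a)}{(y-a)^r(b-y)^r},
\]
and the prefactor times $J(y)$ is indeed $(-1)^r$.

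The main obstacle is the last step: making the confluent limit rigorous so that the jump identity valid for distinct knots survives coalescing $r$ knots at $a$ and $r$ at $b$. The cleanest route is the perturbation-plus-continuity argument above; an alternative is to work directly from the Hermite--Genocchi integral representation of the confluent divided difference, but the limiting argument is lighter and immediately pinpoints the factor $(-1)^r$ coming from $(y-b)^r=(-1)^r(b-y)^r$. Once the two verifications above are in place, uniqueness of the Green's function for $(-1)^r\partial_x^{2r}$ with the stated Dirichlet boundary conditions yields $\green=\Phi$.
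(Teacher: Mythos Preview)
Your proposal is correct and follows essentially the same route as the paper: verify that the candidate is a piecewise polynomial of degree $2r-1$ in $x$ meeting the $r$ Dirichlet conditions at $a,b$ (forced by the $r$-fold endpoint knots), and then check that the jump of $\partial_x^{2r-1}$ at the interior knot $y$ equals $(-1)^r$ after multiplying by the prefactor. The only difference is that the paper simply cites a known jump formula for the highest-order derivative of a B-spline, whereas you derive it via the divided-difference representation and a confluent-limit argument; note that the jump identity in fact holds exactly for repeated endpoint knots, so the limiting step, while valid, can be bypassed by invoking the formula directly.
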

\begin{proof}
Equation \eqref{eq:green} implies that $\green(\cdot,y)$ is a piecewise polynomial of degree $2r-1$ with unit jump of the $(2r-1)$-th derivative. The boundary conditions in \eqref{eq:green} are satisfied by the $r$ repetitions of the knots $a$ and $b$. Using the jump formula for the highest order derivative of a B-spline in Lemma 3.22 of \cite{Lyche:notes}  we then obtain \eqref{eq:kernel},
since the factor in front of the B-spline normalizes the jump of the $(2r-1)$-th derivative of $g(\cdot,y)$.
\end{proof}

For example, for $r=1$,
$$ B[a,y,b](x) = \begin{cases} (x-a)/(y-a) & x \le y, \cr
                            (b-x)/(b-y) & x \ge y,
              \end{cases} $$
and so
$$ g(x,y) = \begin{cases} (x-a)(b-y)/(b-a) & x \le y, \cr
                            (y-a)(b-x)/(b-a) & x \ge y.
              \end{cases} $$
For $r=2$,
$$ B[a,a,y,b,b](x) = \frac{(x-a)^2}{(y-a)^2 (b-a)^2}
               \big( (b-a)(y-x) + 2(b-x) (y-a) \big),
  \quad x \le y, $$
and so
$$ g(x,y) = \frac{(x-a)^2 (b-y)^2}{6(b-a)^3}
               \big( (b-a)(y-x) + 2(b-x) (y-a) \big),
  \quad x \le y. $$
  By the symmetry of $g(x,y)$, we have
$$ g(x,y) =g(y,x), \quad y \le x. $$

For general $r$, we can evaluate $g(x,y)$
for $x \in [a,y]$ by first using the usual Cox-de Boor-Mansfield algorithm
to evaluate the B-spline in \eqref{eq:kernel} (after adding knots to both ends
of the knot vector appropriately) and
then multiplying by the scaling factor.
For $x \in [y,b]$, we just set $g(x,y) = g(y,x)$.

\section{A superconvergent numerical method}

\def\vv{\bm{v}_\phi}
\def\Green{{\mathtt G}}

In this section we describe a simple numerical method to approximate the eigenvalues and eigenfunctions of \eqref{eq:eigenvalue-problem}. We remind that for $r=1$ the eigenvalues and eigenfunctions are known analytically and so we only need to consider the case $r\geq 2$.
Recalling that $\green$ is the Green's function to \eqref{eq:eigenvalue-problem} we discretize the following equivalent formulation of \eqref{eq:eigenvalue-problem} 
\begin{equation}\label{eq:eigenvalue-problem-inverse}
 \int_{a}^b \green(x,y)\phi(y)\,dy = \lambda \phi(x),
\end{equation}
where $\lambda=1/\mu$.
We consider the uniform partition of $[a,b]$ in $m+1$ segments with nodes $\Xi=[\xi_0,\dots,\xi_{m+1}]$
$$
\xi_k= a + k h, \qquad h = (b-a)/(m+1).
$$
By approximating the integral in \eqref{eq:eigenvalue-problem-inverse} with the trapezoidal rule on each segment $[\xi_i,\xi_{i+1}]$ and remembering $\green(x,a)=\green(x,b)=0$ we obtain the approximate equation
$$
\lambda \phi(x) =\int_{a}^b \green(x,y)\phi(y)\,dy\approx h \sum_{\ell=1}^m \green(x,\xi_\ell) \phi(\xi_\ell).
$$
Discretizing $\phi$ with its values $\vv$ at $\xi_1,\dots,\xi_m$ we obtain the finite dimensional eigenproblem
\begin{equation}\label{eq:discrete-eigen-problem}
\Green\vv=\lambda \vv,\qquad \Green:=[\green(\xi_k,\xi_\ell)]_{k,\ell=1,\dots,m}.
\end{equation}
The $k$-th eigenpair of \eqref{eq:discrete-eigen-problem} are then an approximation to the $k$-th eigenpair of \eqref{eq:eigenvalue-problem} and \eqref{eq:eigenvalue-problem-inverse}. We remark that such approximations of the eigenvalues of integral operators have been studied before \cite{Anselone:74,Atkinson:75,Chatelin:81,Osborn:75}. Let $(\lambda_k,\phi_k)$ be the $k$-th eigenpair of \eqref{eq:eigenvalue-problem-inverse} and $(\lambda_{k,h},\phi_{k,h})$ be the $k$-th eigenpair of  \eqref{eq:discrete-eigen-problem} such that $\max_{x\in[a,b]}|\phi_k(x)|= \max_{x\in[a,b]}|\phi_{k,h}(x)|=1$. From \cite{Atkinson:75} and \cite{Osborn:75} we then obtain the following error estimate for our eigenvalues and eigenfunctions: 
 for all $r$ and $k$ there exists a constant $C>0$ such that for all $h<(k+1)^{-1}$
we have
\begin{equation}\label{ineq:eigen-error}
\begin{aligned}
|\lambda_k - \lambda_{k,h}| &\leq C \max_{x\in[a,b]} |E_k(x)|,
\\
\max_{x\in[a,b]}|\phi_k(x) - \phi_{k,h}(x)| &\leq C \max_{x\in[a,b]} |E_k(x)|,
\end{aligned}
\end{equation}
where $E_k(x)$ is the quadrature error of the trapezoidal rule
$$E_k(x):=\int_{a}^bf_k(x,y)\,dy - h \sum_{\ell=1}^m f_k(x,\xi_\ell),$$
and $f_k(x,y):=\green(x,y)\phi_k(y)$.
Since $f_k(x,\cdot)\in C^2([a,b])$ for all $r\geq2$ we expect the error to be $O(h^2)$ as $h\to0$.
However, for $r\geq3$, the method is superconvergent as we prove by the
following argument. 
The function $\green(x,\cdot)\in C^{2r-2}([a,b])$ for each $x\in[a,b]$ and since the eigenfunction $\phi_k\in C^{\infty}([a,b])$ we have $f_k(x,\cdot)\in C^{2r-2}([a,b])$. Now for any $p\leq r-2$ we can use the Euler-Maclaurin expansion \cite[Section 3.4.5]{Dahlquist:2008} to obtain
\begin{align*}
E_k(x)=&-\sum_{j=1}^{p} \frac{B_{2j}}{(2j)!}h^{2j} (\partial_y^{2j-1}f_k(x,b)- \partial_y^{2j-1}f_k(x,a))
\\
&- \frac{B_{2p+2}}{(2p+2)!}h^{2p+2}\partial_y^{2p+2}f_k(x,\eta_x),
\end{align*}
for some $\eta_x\in(a,b)$, and where $B_j$ is the $j$-th Bernoulli number.
From the fact that $\partial_y^{j}\green(x,y)|_{y=a,b}=\partial_y^{j}\phi_k(y)|_{y=a,b}=0$ for all $j=0,\ldots, r-1$, we deduce that 
$\partial_y^{j}f_k(x,y)|_{y=a,b}=0$ for all $j=0,\ldots, 2r-1$.
We now let $p=r-2$ and take the maximum over all the $\eta_x\in[a,b]$ to obtain 
\begin{equation}\label{ineq:superconv}
|\lambda_k - \lambda_{k,h}|+\max_{x\in[a,b]}|\phi_k(x) - \phi_{k,h}(x)|\leq Ch^{2r-2} \max_{x,y\in[a,b]} |\partial_y^{2r-2}f_k(x,y)|.
\end{equation}
Recalling that $d_n=\lambda_{n+1-r}^{1/2}$ this implies that for some constant
$C>0$, depending only on $r$ and $n$, we have
\[
    |d_n-\lambda_{n+1-r,h}^{1/2}|\le C h^{2r-2}.  
\]
Even faster convergence appears in our numerical tests, suggesting that
the ``real'' order is $2r$; see Figs.~\ref{fig:r2-conv}--\ref{fig:r4-conv}. We also observe that the method seems to achieve machine precision for relatively small values of $m$.

The matrix $\Green$ in \eqref{eq:discrete-eigen-problem} is totally positive and symmetric, consequently only its upper triangular part needs to be computed. 
 As explained in the previous section, the coefficients of $\Green$ can be efficiently computed using the usual B-spline recurrence relation.
The numerical method allow us to compute $d_n$ for different values of $r$. 
Using a mesh of size $h=1/2048$ on $[0,1]$ we computed the first $6$ eigenvalues of $\Green$ for $r=1,\dots,20$.
These correspond to $d_n^{2}$ for $n=r,\dots,r+5$.
In Table~\ref{table-conjecture2} we report the error between the computed and conjectured approximation of $d_n^{-1/r}$.
The numbers in the table suggest Conjecture~\ref{conjecture}.

Melkman and Micchelli \cite{Melkman:78} proved that for all $r$ there exists an optimal space $X_n$ containing $C^{r-2}$ piecewise polynomials of degree $r-1$. The zeros of the eigenfunctions in \eqref{eq:eigenvalue-problem} are exactly the internal knots of these spline spaces. Thus, the presented numerical method allows us to compute these knots, and as stated in the introduction, this can be very useful for Isogeometric Analysis \cite{Chan:2018}. See Figure \ref{fig:eigfunc} for various computed eigenfunctions in the case $m=500$.

{
\begin{table}\begin{center}
\scriptsize
\begin{tabular}{r|rrrrrr}
$n$\textbackslash$r$&$1$&$2$&$3$&$4$&$5$&$6$
\\\hline
\\ $r$& $-9.80e-08$& $3.75e-03$& $-3.33e-16$& $-4.49e-03$& $-8.65e-03$& $-1.23e-02$
\\ $r+1$& $-3.92e-07$& $-9.89e-05$& $2.42e-04$& $2.33e-05$& $-6.79e-04$& $-1.70e-03$
\\ $r+2$& $-8.82e-07$& $3.05e-06$& $-3.33e-16$& $3.76e-05$& $-1.95e-05$& $-2.23e-04$
\\ $r+3$& $-1.57e-06$& $-1.03e-07$& $-6.30e-07$& $3.65e-06$& $7.60e-06$& $-2.13e-05$
\\ $r+4$& $-2.45e-06$& $3.62e-09$& $-5.55e-16$& $1.07e-07$& $2.02e-06$& $1.42e-07$
\\ $r+5$& $-3.53e-06$& $-1.36e-10$& $1.95e-09$& $-1.95e-08$& $3.19e-07$& $7.51e-07$
\\ $r+6$& $-4.80e-06$& $-1.11e-12$& $-9.99e-16$& $-3.22e-09$& $3.47e-08$& $2.50e-07$
\\ $r+7$& $-6.27e-06$& $-1.02e-11$& $-6.57e-12$& $-1.98e-10$& $1.78e-09$& $5.95e-08$
\\ $r+8$& $-7.93e-06$& $-1.56e-11$& $-3.22e-15$& $7.20e-12$& $-2.85e-10$& $1.17e-08$
\\ $r+9$& $-9.79e-06$& $-2.33e-11$& $1.78e-14$& $2.83e-12$& $-1.05e-10$& $1.93e-09$
\\ $r+10$& $-1.19e-05$& $-3.36e-11$& $3.11e-15$& $2.35e-13$& $-1.93e-11$& $2.43e-10$
\\ $r+11$& $-1.41e-05$& $-4.69e-11$& $-9.77e-15$& $-8.20e-14$& $-1.97e-12$& $1.52e-11$
\\ $r+12$& $-1.66e-05$& $-6.37e-11$& $-1.24e-14$& $-5.33e-14$& $-1.36e-12$& $-1.21e-11$
\\ $r+13$& $-1.92e-05$& $-8.48e-11$& $-7.03e-14$& $2.22e-16$& $2.70e-13$& $-8.33e-12$
\\ $r+14$& $-2.20e-05$& $-1.11e-10$& $6.00e-15$& $-1.09e-12$& $-5.37e-13$& $-6.12e-12$
\\ $r+15$& $-2.51e-05$& $-1.42e-10$& $1.67e-14$& $1.48e-12$& $1.07e-11$& $-4.82e-11$
\\ $r+16$& $-2.83e-05$& $-1.80e-10$& $1.60e-14$& $6.26e-14$& $1.19e-11$& $-5.00e-11$
\\ $r+17$& $-3.17e-05$& $-2.25e-10$& $-1.25e-14$& $-1.55e-13$& $1.76e-11$& $8.50e-11$
\\ $r+18$& $-3.54e-05$& $-2.78e-10$& $-2.79e-14$& $2.03e-12$& $-3.13e-11$& $-3.25e-10$
\\ $r+19$& $-3.92e-05$& $-3.39e-10$& $-3.19e-13$& $3.85e-12$& $1.38e-11$& $-5.46e-10$
\\ $r+20$& $-4.32e-05$& $-4.10e-10$& $2.74e-13$& $-1.78e-13$& $-7.52e-12$& $-6.63e-10$
\\ $r+21$& $-4.74e-05$& $-4.92e-10$& $-4.04e-13$& $4.82e-13$& $-4.22e-11$& $3.85e-10$
\\ $r+22$& $-5.18e-05$& $-5.85e-10$& $3.02e-13$& $1.30e-11$& $2.68e-11$& $4.53e-11$
\\ $r+23$& $-5.64e-05$& $-6.92e-10$& $4.88e-14$& $-1.09e-11$& $7.96e-11$& $-3.42e-10$
\\ $r+24$& $-6.12e-05$& $-8.12e-10$& $-8.22e-13$& $1.75e-11$& $5.40e-10$& $5.61e-09$
\\ $r+25$& $-6.62e-05$& $-9.47e-10$& $2.58e-13$& $1.81e-11$& $5.76e-10$& $3.92e-09$
\\ $r+26$& $-7.14e-05$& $-1.10e-09$& $2.63e-13$& $-6.21e-12$& $-1.75e-10$& $1.52e-09$
\\ $r+27$& $-7.68e-05$& $-1.27e-09$& $-2.89e-13$& $-2.32e-11$& $-2.74e-10$& $-2.08e-09$
\\ $r+28$& $-8.24e-05$& $-1.45e-09$& $2.16e-13$& $-2.44e-11$& $-1.89e-09$& $1.64e-09$
\\ $r+29$& $-8.82e-05$& $-1.66e-09$& $7.63e-13$& $3.21e-11$& $5.23e-10$& $-2.31e-08$
\end{tabular}
\end{center}
\caption{The relative difference between the conjectured and computed value of $d_n^{-1/r}$ for $r=1,\dots,20$ and $n=r,\dots,r+5$ when $h=2^{-11}$.}\label{table-conjecture2}
\end{table}
} 

\begin{figure}
\begin{center}
\begin{tikzpicture}[scale=.75, transform shape]
	\begin{loglogaxis}[
	minor x tick num=0,
	minor y tick num=9,
	xmin=0.001, xmax=0.25,
	ymin=1e-14, ymax=1,
	y=.25cm,
	x=2cm,
	grid=major,
	legend style={
	at={(0.25,0.50)},
	anchor=south,
	xlabel={Mesh size},
	ylabel={Error}
	}
	]

    \input{code/convergence/r2mn7.txt}

    \addplot+ [black,dashed,mark=*] plot[domain=0.00125:0.125,mark=none] {x^(2)};
    \addplot+ [black,dashed,mark=*] plot[domain=0.00125:0.125,mark=none] {64*x^(4)};
	\legend{
	\texttt{n=2}\\
	\texttt{n=3}\\
	\texttt{n=4}\\
	\texttt{n=5}\\
	\texttt{n=6}\\
	\texttt{n=7}\\
	\texttt{n=8}\\
	}
	\end{loglogaxis}\end{tikzpicture}

	\end{center}
	\caption{Convergence of $(\lambda_{h,n+1-r})^{1/2}$ to $d_n$ for $r=2$ and $n=2,\dots,8$ in terms of $h$. The result with $h=2^{-11}$ was taken as a reference.
    The dashed lines are $h^{2}$ and $h^{4}$. Note that $d_8\approx 10^{-3}$ in this case.}\label{fig:r2-conv}
\end{figure}

\begin{figure}
\begin{center}
\begin{tikzpicture}[scale=.75, transform shape]
	\begin{loglogaxis}[
	minor x tick num=0,
	minor y tick num=9,
	xmin=0.001, xmax=0.25,
	ymin=1e-19, ymax=1e-3,
	y=.25cm,
	x=2cm,
	grid=major,
	legend style={
	at={(0.25,0.50)},
	anchor=south,
	xlabel={Mesh size},
	ylabel={Error}
	}
	] 
	
    \input{code/convergence/r3mn7.txt}

    \addplot+ [black,dashed,mark=*] plot[domain=0.00125:0.125,mark=none] {x^(4)};
    \addplot+ [black,dashed,mark=*] plot[domain=0.00125:0.125,mark=none] {64*x^(6)};
	\legend{
	\texttt{n=3}\\
	\texttt{n=4}\\
	\texttt{n=5}\\
	\texttt{n=6}\\
	\texttt{n=7}\\
	\texttt{n=8}\\
	\texttt{n=9}\\
	}
	\end{loglogaxis}\end{tikzpicture}

	\end{center}
	\caption{Convergence of $(\lambda_{h,n+1-r})^{1/2}$ to $d_n$ for $r=3$ and $n=3,\dots,9$ in terms of $h$. The result with $h=2^{-11}$ was taken as a reference.
    The dashed lines are $h^{4}$ and $h^{6}$. Note that $d_8\approx 10^{-4}$ in this case.}\label{fig:r3-conv}
\end{figure}

\begin{figure}
\begin{center}
\begin{tikzpicture}[scale=.75, transform shape]
	\begin{loglogaxis}[
	minor x tick num=0,
	minor y tick num=9,
	xmin=0.001, xmax=0.25,
	ymin=1e-21, ymax=1e-3,
	y=.25cm,
	x=2cm,
	grid=major,
	legend style={
	at={(0.25,0.50)},
	anchor=south,
	xlabel={Mesh size},
	ylabel={Error}
	}
	] 
	
    \input{code/convergence/r4mn7.txt}

    \addplot+ [black,dashed,mark=*] plot[domain=0.00125:0.125,mark=none] {x^(6)};
    \addplot+ [black,dashed,mark=*] plot[domain=0.00125:0.125,mark=none] {64*x^(8)};
	\legend{
	\texttt{n=4}\\
	\texttt{n=5}\\
	\texttt{n=6}\\
	\texttt{n=7}\\
	\texttt{n=8}\\
	\texttt{n=9}\\
	\texttt{n=10}\\
	}
	\end{loglogaxis}\end{tikzpicture}

	\end{center}
	\caption{Convergence of $(\lambda_{h,n+1-r})^{1/2}$ to $d_n$ for $r=4$ and $n=4,\dots,10$ in terms of $h$. The result with $h=2^{-11}$ was taken as a reference.
    The dashed lines are $h^{6}$ and $h^{8}$. Note that $d_8\approx 10^{-5}$ in this case.}\label{fig:r4-conv}
\end{figure}

\begin{figure}
\begin{subfigure}{\textwidth}
\centering
\includegraphics[width=0.24\textwidth]{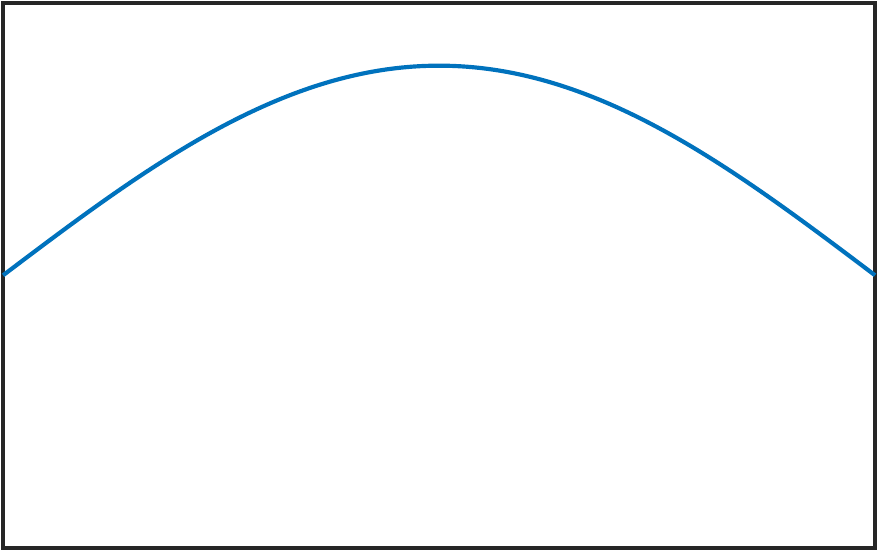}
\includegraphics[width=0.24\textwidth]{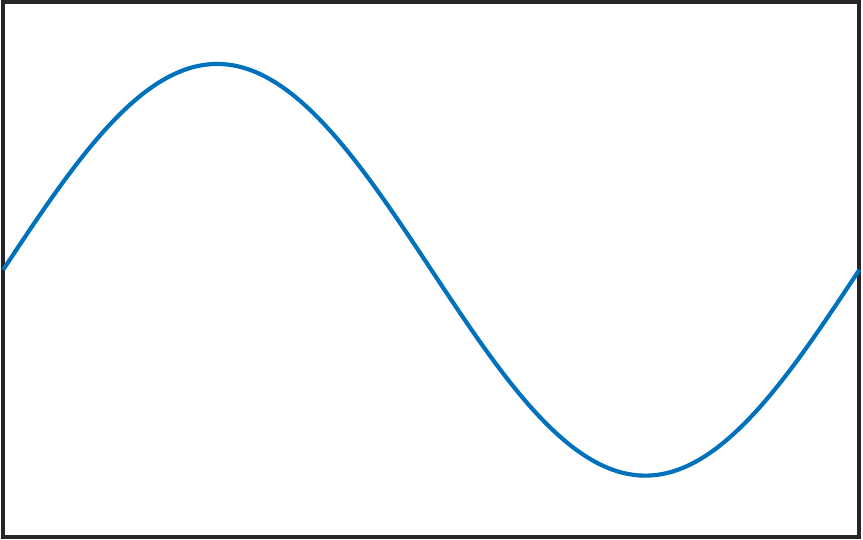}
\includegraphics[width=0.24\textwidth]{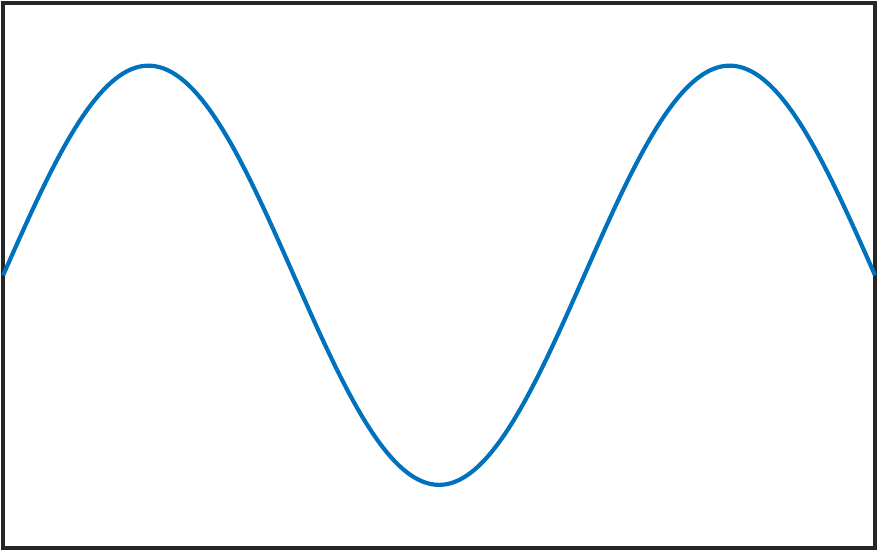}
\includegraphics[width=0.24\textwidth]{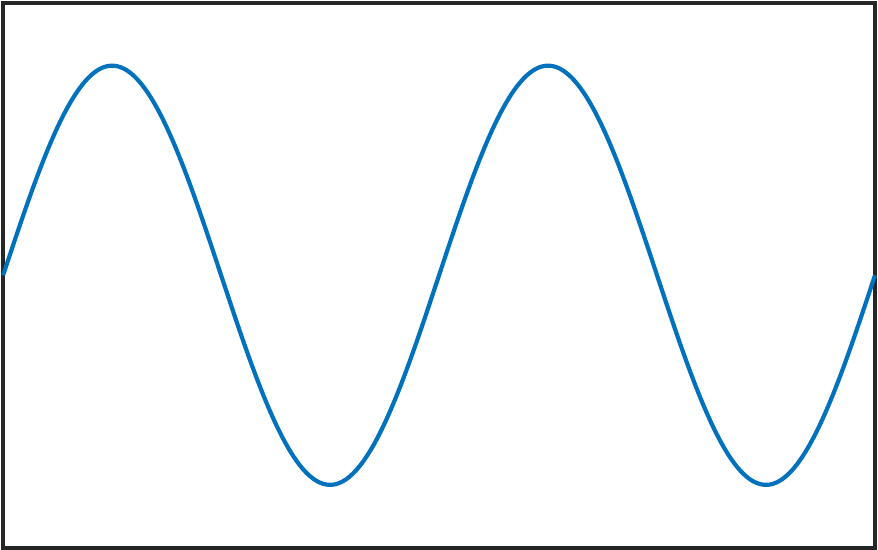}
\caption{First four eigenfunctions, $r=1$.}
\end{subfigure}
\begin{subfigure}{\textwidth}
\centering
\includegraphics[width=0.24\textwidth]{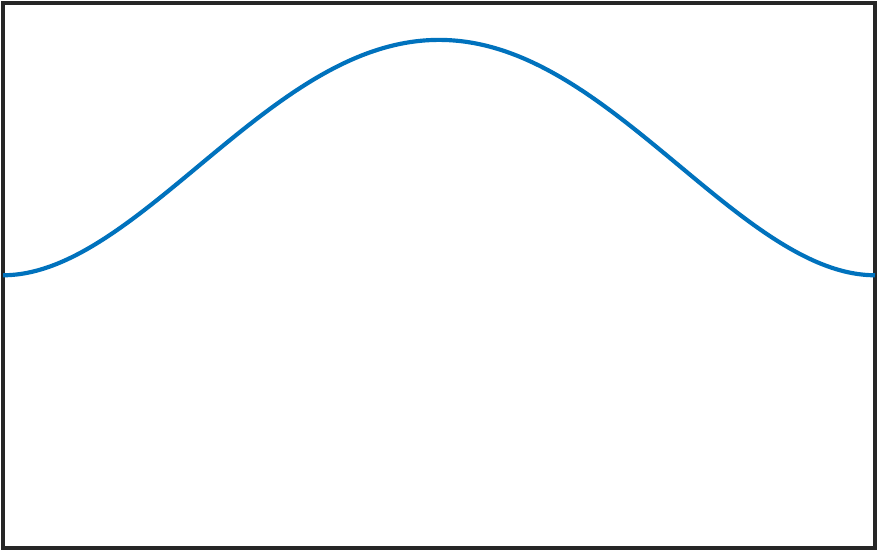}
\includegraphics[width=0.24\textwidth]{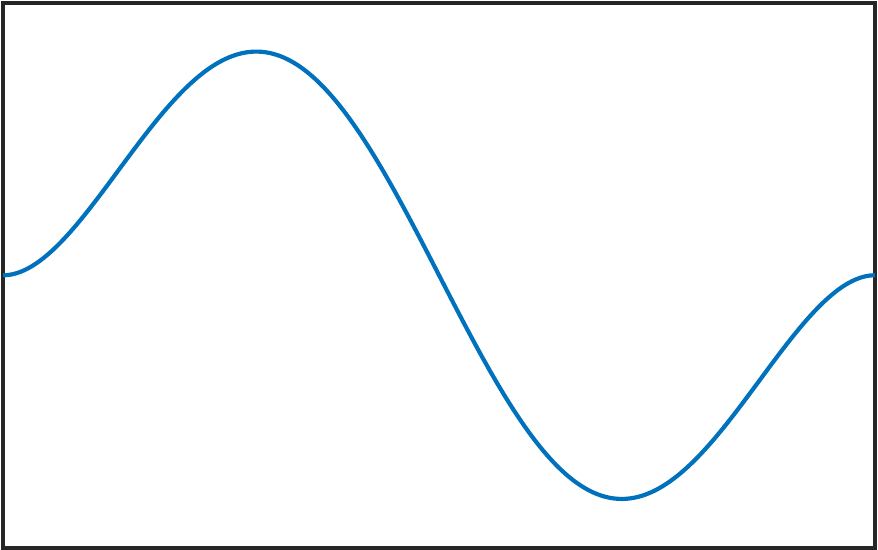}
\includegraphics[width=0.24\textwidth]{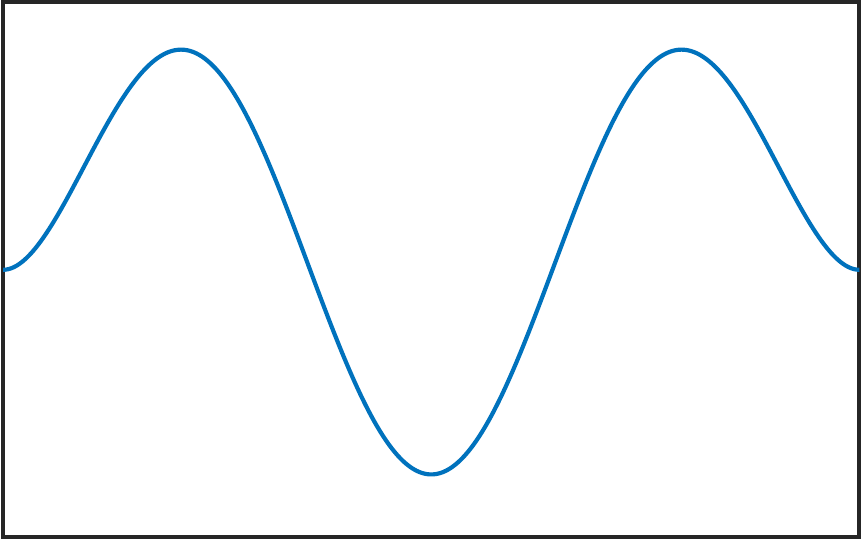}
\includegraphics[width=0.24\textwidth]{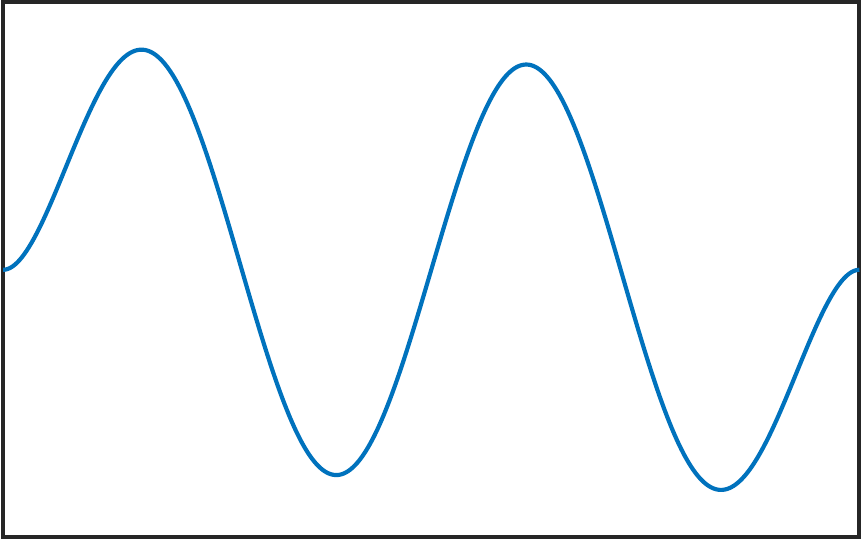}
\caption{First four eigenfunctions, $r=2$.}
\end{subfigure}
\begin{subfigure}{\textwidth}
\centering
\includegraphics[width=0.24\textwidth]{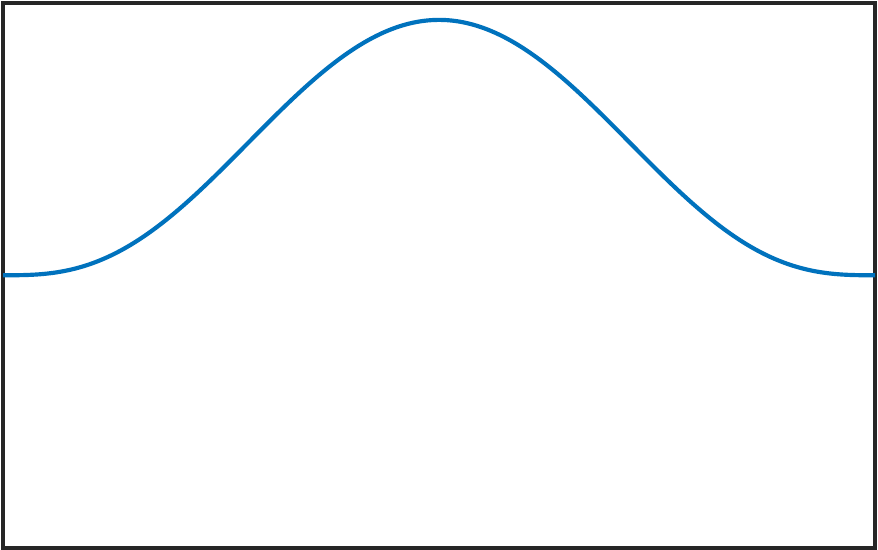}
\includegraphics[width=0.24\textwidth]{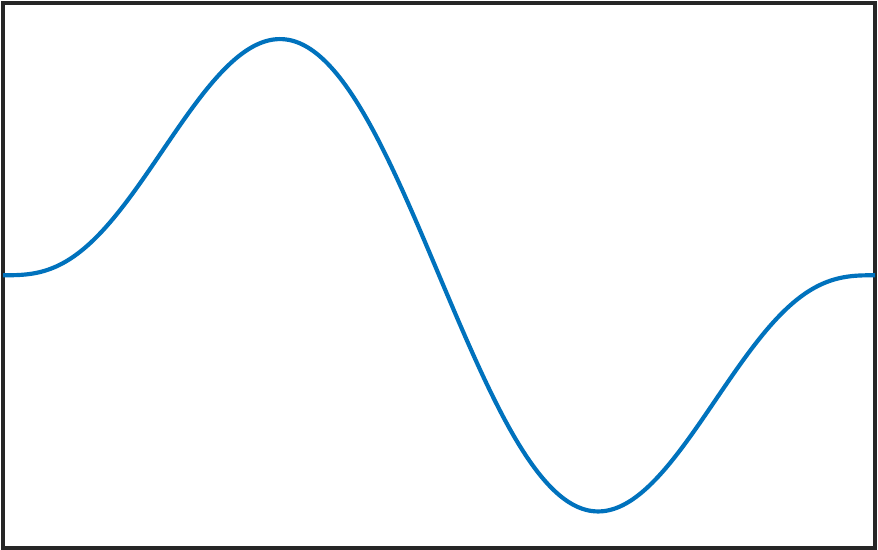}
\includegraphics[width=0.24\textwidth]{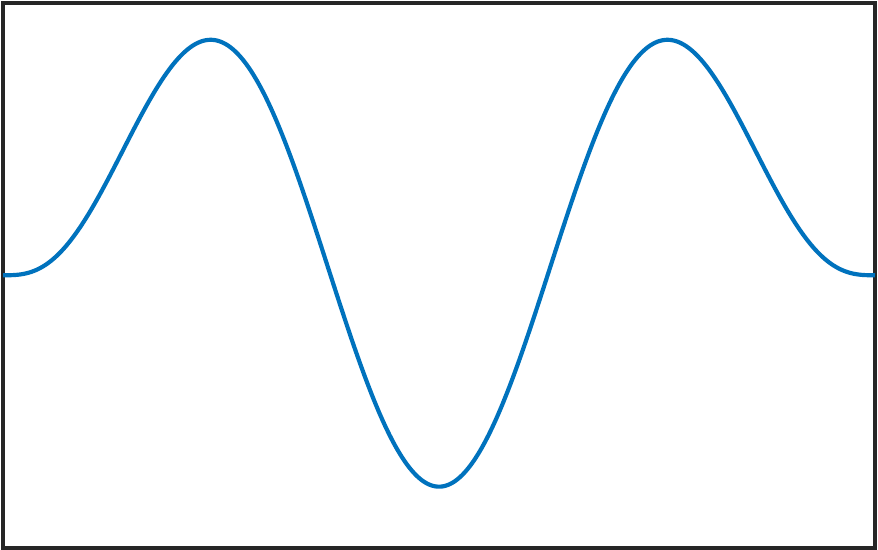}
\includegraphics[width=0.24\textwidth]{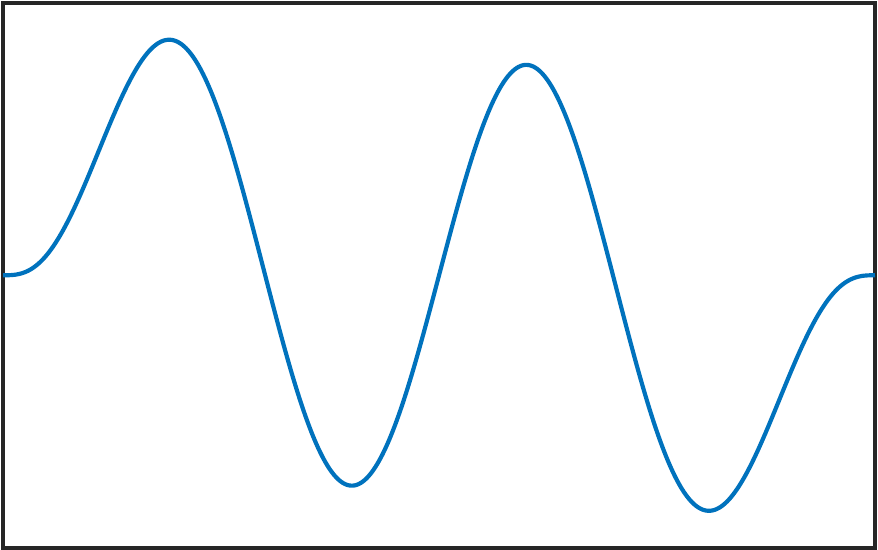}
\caption{First four eigenfunctions, $r=3$.}
\end{subfigure}
\begin{subfigure}{0.5\textwidth}
\centering
\includegraphics[width=0.48\textwidth]{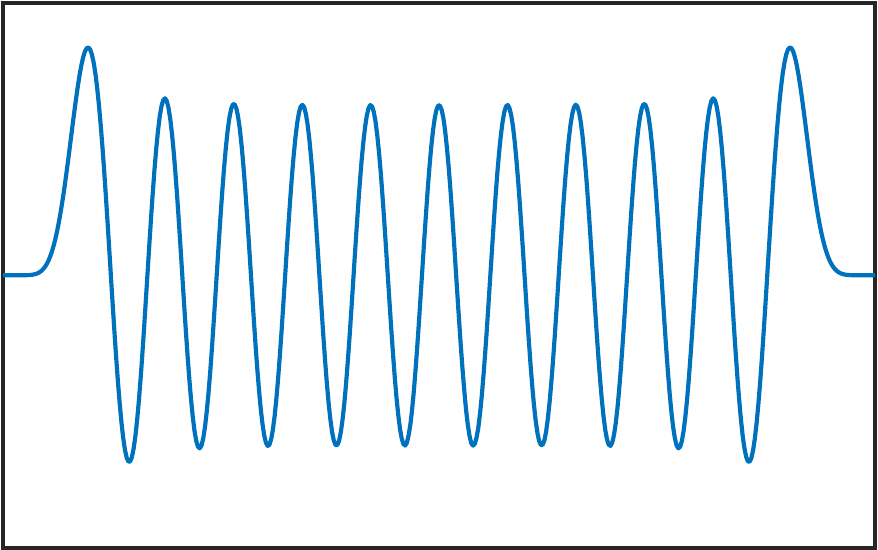}
\includegraphics[width=0.48\textwidth]{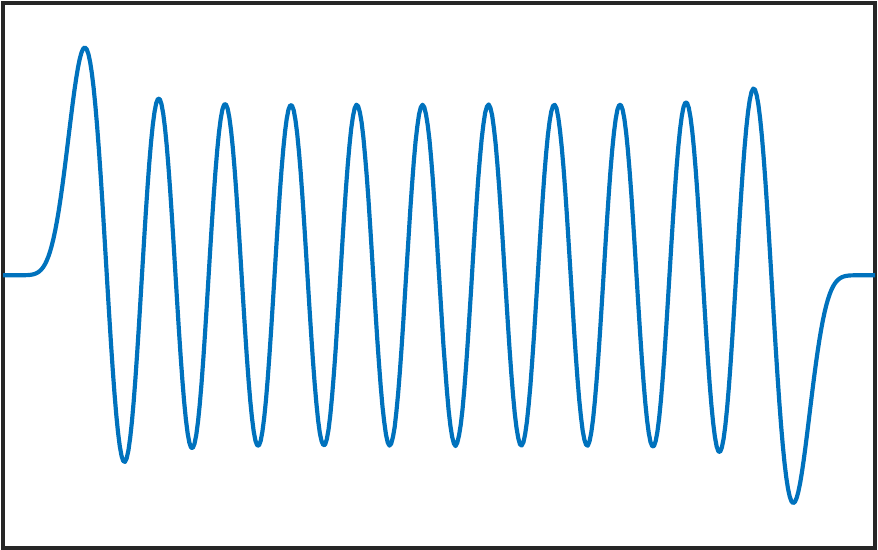}
\caption{Eigenfunctions $21$ and $22$, $r=10$.}
\end{subfigure}
\begin{subfigure}{0.5\textwidth}
\centering
\includegraphics[width=0.48\textwidth]{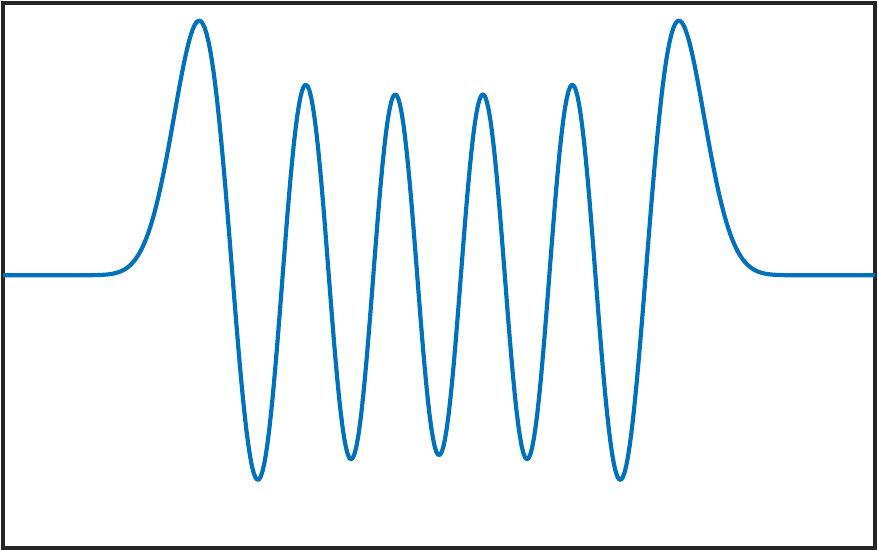}
\includegraphics[width=0.48\textwidth]{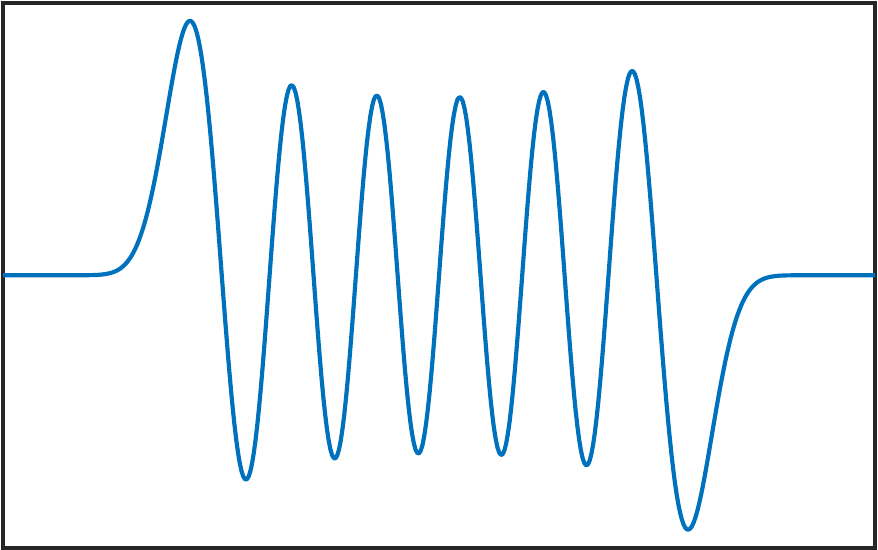}
\caption{Eigenfunctions $11$ and $12$, $r=20$.}
\end{subfigure}
\caption{Computed eigenfunctions on the interval $[a,b]=[-1,1]$ with $m=500$ for various choices of $r$ and $k$.}
\label{fig:eigfunc}
\end{figure}

\section*{Acknowledgements}
Andrea Bressan was partially supported by the European Research Council
through the FP7 Ideas Consolidator Grant \emph{HIGEOM} n.616563, and
by the Italian Ministry of Education, University and Research (MIUR)
through the  ``Dipartimenti di Eccellenza Program (2018-2022) - 
Dept. of Mathematics, University of Pavia''.
Espen Sande was supported 
by the Beyond Borders Programme of the University of Rome Tor Vergata through the project ASTRID (CUP E84I19002250005)
and by the MIUR Excellence Department Project awarded to the Department of Mathematics, University of Rome Tor Vergata (CUP E83C18000100006). Andrea Bressan and Espen Sande are members of Gruppo Nazionale per il Calcolo Scientifico, Istituto Nazionale di Alta Matematica.

\bibliographystyle{amsplain}
\bibliography{eigen}

\end{document}